\numberwithin{equation}{section}
\newcommand{\mi}{\mathbbm{i}}
\newcommand{\bbC}{\mathbb{C}}
\newcommand{\bbN}{\mathbb{N}}
\newcommand{\bbP}{\mathbb{P}}
\newcommand{\bbR}{\mathbb{R}}
\newcommand{\bbS}{\mathbb{S}}
\newcommand{\bbZ}{\mathbb{Z}}
\theoremstyle{plain}
\newtheorem{theorem}{Theorem}[section]
\newtheorem*{theorem*}{Theorem}
\newtheorem*{corollary*}{Corollary}
\newtheorem*{proposition*}{Proposition}
\newtheorem{lemma}[theorem]{Lemma}
\newtheorem*{lemma*}{Lemma}
\newtheorem*{example*}{Example}
\newtheorem*{definition*}{Definition}
\newtheorem*{notation*}{Notation}
\newtheorem*{remark*}{Remark}
\numberwithin{figure}{section}
\newcommand{\cmc}{{\sc{cmc}}\xspace}
\newlength{\PWIDTH}
\newlength{\PSPACE}
\newcommand{\spaceperiod}{\,\,.}
\title{Bubbletons are not embedded}
\author{Martin Kilian}
\address{M. Kilian, Department of Mathematics,
University College Cork, Ireland.}
\email{m.kilian@ucc.ie}
\thanks{{\it Mathematics Subject Classification. }53A10. \today}
\begin{document}


\begin{abstract}
We discuss constant mean curvature bubbletons in Euclidean 3-space via dressing with simple factors, and prove that single bubbletons are not embedded.
\end{abstract}


\maketitle

\section*{Introduction}

A key feature of an integrable system is the presence of an algebraic transformation method which generates new solutions from old ones. In particular even by starting with a trivial solution one obtains a hierarchy of interesting global solutions. For the KdV equation one thus obtains the solitons via a B\"acklund transform. Solitons are solitary traveling waves with localized energy that are stable when interacting with each other. Many of the modern techniques in integrable systems theory stem from classical surface theory, developed by B\"acklund, Bianchi and Darboux amongst others for the structure equations of special surface classes.

Away from umbilic points the structure equation of constant mean curvature (\cmc) surfaces is the sinh-Gordon equation, whose trivial solution gives rise to the round cylinder. The term 'bubbleton' is due to Sterling and Wente \cite{SteW:bub}, and the bubbletons are the solitons of the sinh-Gordon equation. The single bubbletons are obtained by transforming the standard cylinder by a Bianchi-B\"acklund transform. The resulting transformed \cmc cylinder globally looks like the standard cylinder except for a localized part in which bubble-like pieces are added to the underlying surface, see figure \ref{fig:bubbleton}. A video of how bubbles interact when they move through each other can be seen at \cite{Sch:bub_movie}.

Recently the classical transformations have received a treatment from the modern point of view of dressing \cite{Bur, BurDLQ, BurP:dre, DorH:dre, DorK, HerP, Kil:thesis, KilSS, Kob:bub, KobI, TerU}. Bubbletons can be realized by dressing the round cylinder by a class of very simple maps, called simple factors \cite{TerU}. By repeatedly applying the Bianchi-B\"{a}cklund transformation to the standard round cylinder one produces the 'multi-bubbletons' classified by Sterling and Wente \cite{SteW:bub}. While graphics of these surfaces clearly suggest that they are not embedded (see figure \ref{fig:bubbleton}), there does not seem to be a direct proof of this fact in the literature. An indirect proof when the target is the 3-sphere is given in \cite{KilS}.

The purpose here is to prove that a single bubbleton is not embedded. This is proven by showing that single bubbletons possess a planar curve which is not homologous to zero on the surface. It is shown that this curve has turning number at least three, which implies that the surface cannot be embedded. Furthermore the choice of the 'singularity' in the simple factor is reflected in the geometry of the resulting bubbleton. The monotone sequence of the singularities are indexed by an integer $K \in \bbN$ for $K \geq 2$, and $K$ is the number of 'bubbles' of the bubbleton, and $2K-1$ turns out to be the turning number of the planar curve.

\section{The round cylinder}

If $\mi = \sqrt{-1}$, then in the spinor representation of Euclidean 3-space $\bbR^3$ we identify $\bbR^3 \cong \mathfrak{su}_2$ via
\begin{equation*}
    (x_1,\, x_2,\,x_3 ) \cong \begin{pmatrix} \mi x_3 & x_1 + \mi x_2  \\ -x_1 + \mi x_2 & -\mi x_3 \end{pmatrix} \,.
\end{equation*}
The extended frame of a round cylinder (up to isometry and conformal change of coordinate) is
\begin{equation} \label{eq:flat-frame}
    F_\lambda(z)
        = \begin{pmatrix} \cos \mu_\lambda  & \mi\,\lambda^{-1/2}\sin \mu_\lambda \\ \mi\,\lambda^{1/2} \sin \mu_\lambda & \cos \mu_\lambda \end{pmatrix} \,,
\end{equation}
where
\begin{equation} \label{eq:mu-function}
        \mu_\lambda = \mu_\lambda(z) = \tfrac{\pi}{2} \bigl( \,\, z\,\lambda^{-1/2} + \bar{z}\,\lambda^{1/2}) \spaceperiod
\end{equation}
The Sym-Bobenko formula \cite{Sym, Bob:cmc} for a \cmc surface in Euclidean 3-space $\bbR^3$ is a formula of the immersion in terms of its extended frame $F_\lambda$. In our conventions \cite{SKKR}, the associated family with constant mean curvature $H \in \bbR^\times$ is given by
\begin{equation} \label{eq:sym-bobenko}
    f_\lambda(z) = -2 \mi \lambda\,H^{-1} F^{\,\prime}_\lambda(z) F_\lambda^{-1}(z)
\end{equation}
where $F^{\,\prime}_\lambda$ denotes the derivative with respect to $\lambda$. If we pick one member of the associated family $f_\lambda(z),\,\lambda \in \bbS^1$ and choose $\lambda =1$, and insert the extended frame \eqref{eq:flat-frame} of the standard round cylinder, we obtain
\begin{equation*}
    f_1(x,\,y) = H^{-1} \begin{pmatrix}
    \mi \sin ^2(\pi x) & -\cos(\pi x)\sin(\pi x) - \pi\mi y\\
     \cos(\pi x)\sin(\pi x) - \pi\mi y & -\mi \sin ^2(\pi x)
    \end{pmatrix}
     \cong \frac{1}{2H} \begin{pmatrix} \sin (2\pi x) \\ -2 \pi y \\1 - \cos(2\pi x) \end{pmatrix}\,.
\end{equation*}
This is clearly a round cylinder which is generated by a circle in the $x_1 x_3$-plane of radius $1/(2|H|)$ centered at the point $(0,\,1/(2H))$ parallel translated along the $x_2$-axis. Any curve $x_2 = c$ for some constant $c \in \bbR$ is not contractable on the cylinder. By restricting the $x_1$-coordinate to any interval of length one we obtain an embedded round cylinder.

In general, or when the parametrization is less explicit, one can describe the period problem as follows. Suppose we have an extended frame $F_\lambda$ and an associated family of \cmc surfaces as in \eqref{eq:sym-bobenko}. Then periodicity $f_\lambda (z + \tau) = f(z)$ for all $z \in \bbC$ can in general not hold for all $\lambda \in \bbS^1$. But if we fix $\lambda_0 \in \bbS^1$, then periodicity reads
\begin{equation} \label{eq:periodicity}
    F^{\,\prime}_{\lambda_0}(z + \tau) F_{\lambda_0}^{-1}(z + \tau) = F^{\,\prime}_{\lambda_0}(z) F_{\lambda_0}^{-1}(z)\,.
\end{equation}
If we define the monodromy matrix $M_\lambda(\tau)$ with respect to the translation $z \mapsto z +\tau$ of $F_\lambda$ by
\begin{equation} \label{eq:monodromy}
    M_\lambda (\tau) = F_\lambda (z +\tau) F_\lambda^{-1}(z)
\end{equation}
then the period problem \eqref{eq:periodicity} reads
\begin{equation} \label{eq:closing0}
    M_{\lambda_0} (\tau) = \pm \mathbbm{1} \mbox{ and }  M^{\,\prime}_{\lambda_0} (\tau)  = 0\,.
\end{equation}
The monodromy matrix is not well defined, since it depends on the choice of a base point, and so is only defined up to conjugacy. However, the periodicity conditions \eqref{eq:closing0} are invariant under conjugation.

Since we need some of the above in the special case of the round cylinder, let us specialize again in this case. If $F_\lambda$ is the extended frame \eqref{eq:flat-frame} of a round cylinder, we choose the base point $z_0 =0$ and note that $F_\lambda(0) = \mathbbm{1}$ for all $\lambda \in \bbC^\times$. As before pick $\lambda_0 =1$. The monodromy of $F_\lambda(z)$ with respect to the translation $\tau:z \mapsto z + 1$ is then
\begin{equation} \label{eq:monodromy-flat}
    M_\lambda(\tau) = F_\lambda(1)\,,
\end{equation}
and a quick computation confirms that $F_1(1) = - \mathbbm{1}$ and $F^{\,\prime}_1(1) = 0$.
%
%
\begin{figure}[t]
  \centering
  \includegraphics[scale=0.35]{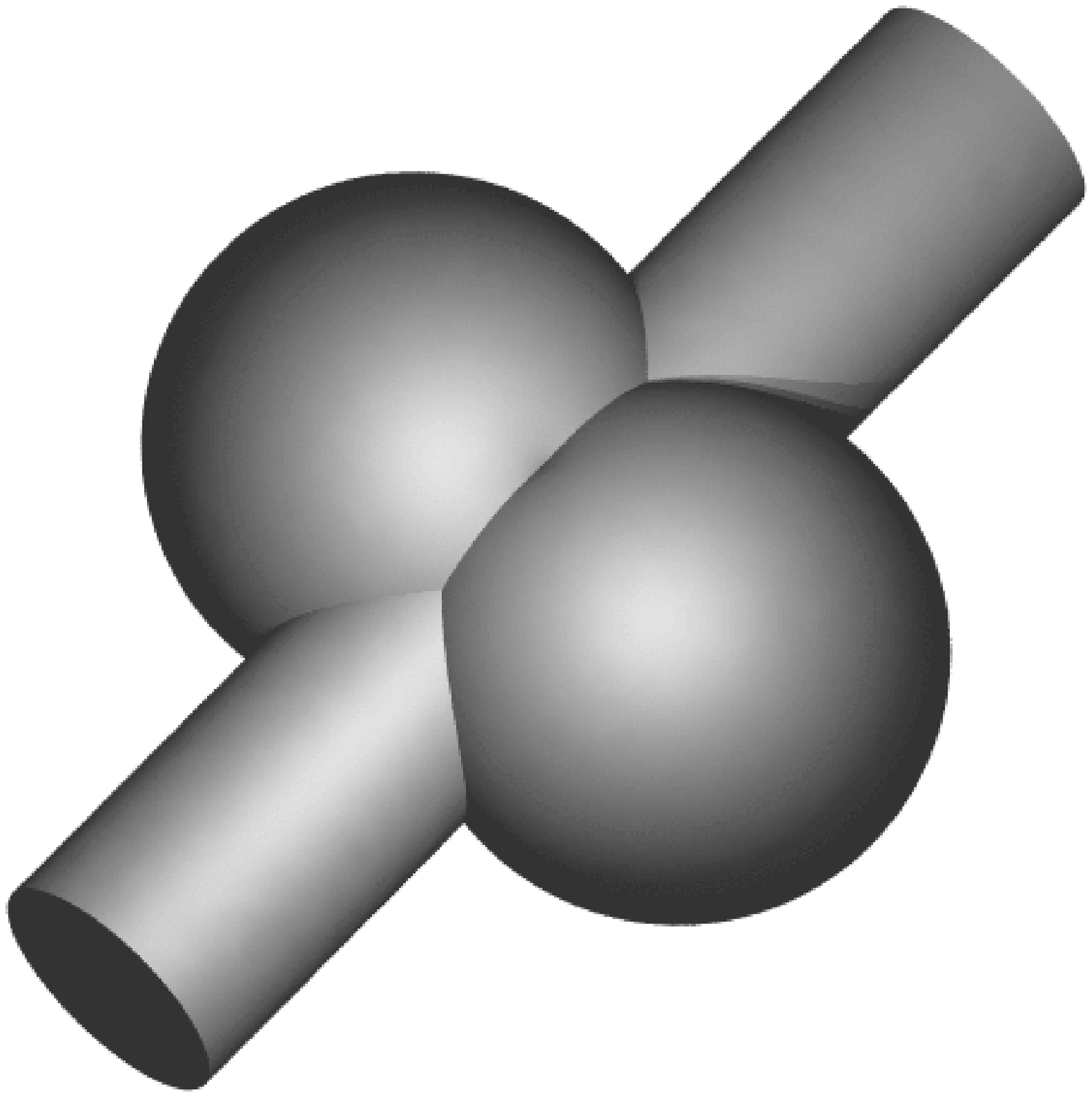}
  \includegraphics[scale=0.35]{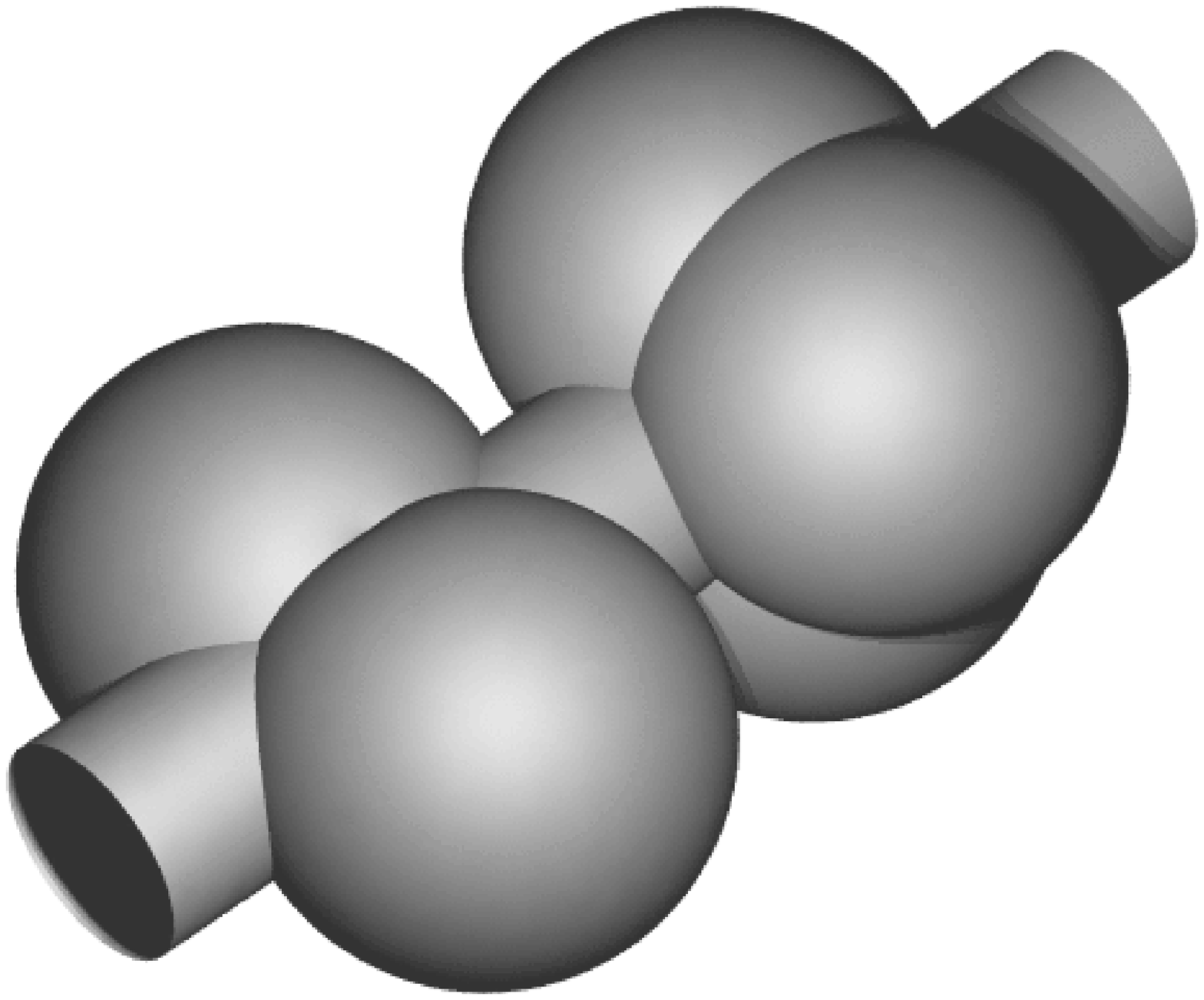}
  \caption{Parts of a two-lobed bubbleton, and a multi-bubbleton with 2 and 3 lobes. Further graphics of bubbletons can be viewed at \cite{Sch:web, Sch:gallery}.
    \label{fig:bubbleton}}
\end{figure}
%

%

\section{Simple factors}

There is a deformation technique in the theory of harmonic maps called \emph{dressing} \cite{BurP:dre}. In particular, dressing by specific very simple maps corresponds to the classical Bianchi-B\"acklund transformation \cite{KobI}, and amounts to adding 'bubbletons' to the standard round cylinder - these simple maps are called \emph{simple factors} \cite{TerU}. Let us briefly review the theory of simple factors in the context of \cmc surfaces in $\bbR^3$. Let $\pi_L:\bbC^2 \to L$ be the hermitian projection onto a line $L \in \bbC \bbP^1$, and $\pi_L^\perp = \mathbbm{1}- \pi_L$. For $\alpha \in \bbC^\times$, set
\begin{equation} \label{eq:simple}
  \psi_{L,\,\alpha} (\lambda) =
  \pi_L +  \frac{\alpha - \lambda}
  {1-\bar{\alpha}\,\lambda}\,\pi_L^\perp.
\end{equation}
To normalize make the determinant equal to 1 and do a Gram-Schmidt factorization at $\lambda = 0$ to obtain
$(\det \psi_{L,\,\alpha}(0))^{-1/2}\,\psi_{L,\,\alpha}(0) = Q\,R$ with $Q \in \mathrm{SU}_2$ and $R \in \mathrm{SL}_2$ upper triangular with positive real entries on the diagonal. A \emph{simple factor} is a map of the form
  \begin{equation} \label{eq:simple-factor}
    h_{L,\,\alpha} =
    (\det \psi_{L,\,\alpha})^{-1/2}\,
    Q^{-1} \psi_{L,\,\alpha} \spaceperiod
  \end{equation}
By Proposition 4.2 in \cite{TerU} dressing by simple factors is explicit, and adapted to the case at hand in Theorem 1.2 in \cite{KilSS}:
Generally, suppose that $F_\lambda$ is an extended frame, and $h_{L,\,\alpha}$ a simple factor with $\alpha \in \bbC^\times,\,|\alpha|<1$, and $L \in \bbP^1$. Then the dressed extended frame is given by dressing on an $r$-circle with $r<|\alpha|<1$, and is
  \begin{equation}\label{eq:dressing}
    h_{L,\alpha}\# \,F_\lambda =
    h_{L,\alpha} \, F_\lambda \, h^{-1}_{\widetilde{L},\alpha}
    \mbox{ with } \widetilde{L} = \overline{F_\alpha(z)}^tL.
  \end{equation}
We next show that to obtain the single bubbletons we can choose diagonal simple factors with very specific singularities $\alpha$. 
\begin{lemma}
Up to isometry and conformal coordinate change any single bubbleton can be obtained by dressing the round cylinder by a simple factor $h_{L,\,\alpha}$ with line $L=[1 : 0]$, so of the form
\begin{equation} \label{eq:simple-matrix}
     h_{L,\,\alpha} = \left(\tfrac{\alpha - \lambda}{1- \alpha\,\lambda}\right)^{-1/2} \begin{pmatrix} 1 & 0 \\ 0 & \frac{\alpha - \lambda}{1- \alpha\,\lambda} \end{pmatrix}\,,
\end{equation}
with real $\alpha \in (0,\,1)$ of the form 
\begin{equation} \label{eq:alpha}
    \alpha = 2K^2 -1 - 2K \sqrt{K^2-1} \mbox{ for some integer } K\geq 2 \,.
\end{equation}
\end{lemma}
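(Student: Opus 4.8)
The plan is to reduce the general dressing construction to the special form claimed, in two stages: first showing one may take the line to be $L=[1:0]$ so that $\psi_{L,\alpha}$ is diagonal, and second pinning down which singularities $\alpha$ produce a \emph{closed} (hence periodic) dressed surface. For the first stage I would use the freedom of ambient isometry. The dressing formula \eqref{eq:dressing} shows that the dressed frame is $h_{L,\alpha}F_\lambda h^{-1}_{\widetilde L,\alpha}$, and conjugating the extended frame of the round cylinder by a fixed $k\in\mathrm{SU}_2$ (which rotates the axis of the cylinder in $\bbR^3$, an isometry) rotates the line $L$ by $k$ as well. Since $F_\lambda(z)$ in \eqref{eq:flat-frame} is, up to the explicit $\lambda^{\pm1/2}$ scalings, a one-parameter subgroup, one checks that the $\mathrm{SU}_2$-orbit of lines $L$ through this conjugation action is large enough to move any $L$ to $[1:0]$; the single bubbleton depends on $L$ only through this orbit plus a conformal reparametrization in $z$ (which translates the localized bubble along the cylinder). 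Substituting $L=[1:0]$ into \eqref{eq:simple} gives $\psi_{L,\alpha}=\mathrm{diag}(1,\tfrac{\alpha-\lambda}{1-\bar\alpha\lambda})$, and then the Gram--Schmidt normalization \eqref{eq:simple-factor} is trivial (the matrix is already diagonal with the appropriate phase), yielding \eqref{eq:simple-matrix} — here one also argues $\alpha$ may be taken real, e.g.\ by absorbing $\arg\alpha$ into the $z$-translation.

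For the second stage I would impose the closing conditions \eqref{eq:closing0} on the dressed frame. Using \eqref{eq:monodromy} and \eqref{eq:monodromy-flat}, the monodromy of $h_{L,\alpha}\#F_\lambda$ with respect to $\tau:z\mapsto z+1$ is conjugate to $h_{L,\alpha}(\lambda)\,F_\lambda(1)\,h^{-1}_{\widetilde L(z+1),\alpha}(\lambda)$. Because $\widetilde L=\overline{F_\alpha(z)}^t L$ and $F_\alpha$ has monodromy $F_\alpha(1)$ under $\tau$, the line $\widetilde L$ transforms under $z\mapsto z+1$ by the fixed matrix $\overline{F_\alpha(1)}^t$; so the condition that the dressed monodromy equal $\pm\mathbbm 1$ (together with the derivative condition) becomes an algebraic condition on $\alpha$ coming from requiring $F_\alpha(1)$ to fix the line $L$ appropriately. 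Concretely, since $F_1(1)=-\mathbbm 1$ and $F_1'(1)=0$ already, the only obstruction is the $z$-dependence introduced through $\widetilde L$, and demanding that this be trivial forces $\overline{F_\alpha(1)}^tL=L$, i.e.\ $L=[1:0]$ must be an eigenline of $F_\alpha(1)$. Writing out $F_\alpha(1)$ from \eqref{eq:flat-frame}–\eqref{eq:mu-function} with $\lambda=\alpha$, the off-diagonal entries are proportional to $\sin\mu_\alpha(1)$ where $\mu_\alpha(1)=\tfrac{\pi}{2}(\alpha^{-1/2}+\alpha^{1/2})$; the eigenline condition is $\sin\mu_\alpha(1)=0$, i.e.
\begin{equation*}
    \tfrac{\pi}{2}\bigl(\alpha^{-1/2}+\alpha^{1/2}\bigr)=K\pi \quad\text{for some integer }K,
\end{equation*}
and solving the resulting quadratic $\alpha + 1 = 2K\sqrt{\alpha}$, i.e.\ $t^2 - 2Kt + 1 = 0$ with $t=\sqrt\alpha$, then $\alpha = t^2$, gives exactly \eqref{eq:alpha}, with the root in $(0,1)$ selected and $K\geq 2$ forced by $\alpha\in(0,1)$, $\alpha\neq 0$ (the case $K=1$ gives $\alpha=1$, excluded since $|\alpha|<1$, and $K=0$ is impossible for positive $\alpha$).

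The main obstacle I expect is the first stage: carefully justifying that the $L$-dependence of a single bubbleton is \emph{entirely} absorbed by ambient isometries and conformal coordinate changes, rather than giving genuinely new surfaces. This requires understanding the $\mathrm{SU}_2\times(\text{translations})$ action on the space of lines $L\in\bbP^1$ induced by conjugating \eqref{eq:flat-frame} and translating $z$, and checking transitivity onto a normal form; one must be slightly careful because not every $k\in\mathrm{SU}_2$ commutes with the structure of $F_\lambda$ in a way that preserves the extended-frame normalization, so the reduction may need to be done at the level of the Sym--Bobenko immersion \eqref{eq:sym-bobenko} directly. The second stage, by contrast, is a short explicit computation once one observes that $F_1(1)=-\mathbbm 1$ already handles the $\lambda_0=1$ closing condition and only the eigenline condition on $F_\alpha(1)$ remains; the algebra reducing $\sin\mu_\alpha(1)=0$ to \eqref{eq:alpha} is routine.
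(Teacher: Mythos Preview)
Your overall strategy matches the paper's: normalize $L$ to $[1:0]$ using the $\mathrm{SU}_2$-freedom, then impose periodicity, which yields the eigenline condition $\overline{F_\alpha(1)}^t L=L$; with $L=[1:0]$ this reads $\sin\mu_\alpha(1)=0$, hence $\alpha^{-1/2}+\alpha^{1/2}=2K$. Three places where your argument needs correction or completion:

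\emph{First stage.} You propose conjugating the cylinder frame $F_\lambda$ by $k\in\mathrm{SU}_2$ and tracking how $L$ moves; you correctly flag this as delicate. The clean observation, which dissolves your ``main obstacle'', is the identity $h_{UL,\alpha}=U\,h_{L,\alpha}\,U^{-1}$ for any $U\in\mathrm{SU}_2$, immediate from $\pi_{UL}=U\pi_L U^{-1}$. Since dressing by a simple factor and by its $\mathrm{SU}_2$-conjugate produce surfaces differing only by an isometry and conformal reparametrization, and $\mathrm{SU}_2$ acts transitively on $\bbC\bbP^1$, you may take $L=[1:0]$ directly, without touching $F_\lambda$ at all.

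\emph{Reality of $\alpha$.} Your suggestion to absorb $\arg\alpha$ into a $z$-translation is not right: translating $z$ does not move the pole of the simple factor. No gauge argument is needed here. The condition $\sin\mu_\alpha(1)=0$ forces $\mu_\alpha(1)=\tfrac{\pi}{2}(\alpha^{-1/2}+\alpha^{1/2})\in\pi\bbZ$, so $\alpha^{-1/2}+\alpha^{1/2}$ is a real (even) integer; together with $|\alpha|\neq1$ this already forces $\alpha$ to be real and positive.

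\emph{The two roots.} The quadratic $t^2-2Kt+1=0$ has two roots, giving $\alpha_\pm$ with $\alpha_+\alpha_-=1$. Saying ``select the root in $(0,1)$'' is not a proof that this exhausts all single bubbletons up to isometry: you must show that $\alpha$ and $\alpha^{-1}$ produce the same surface. The paper does this via $\psi_{L,\alpha^{-1}}=\psi_{L,\alpha}^{-1}$, which after the normalization relates $h_{L,\alpha^{-1}}$ to $h_{UL,\alpha}$ for a specific $U\in\mathrm{SU}_2$, reducing again to the first-stage argument.
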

\begin{proof}
If we want to dress the extended frame of the round cylinder, then we will have to choose the line $L$ and the singularity $\alpha$ in such a way that the resulting bubbleton remains periodic with the same period as the underlying round cylinder. To get the conditions on $L$ and $\alpha$ we will need to look at the monodromy of the dressed extended frame \eqref{eq:dressing}, with respect to the same translation $\tau :z \mapsto z +1$. Since $F_\lambda(0) = \mathbbm{1}$ for all $\lambda \in \bbC^\times$, for the dressed frame we also have $h_{L,\alpha}\# \,F_\lambda |_{z=0} = \mathbbm{1}$ since $\widetilde{L} = L$ for $z=0$. Hence with the monodromy $M_\lambda(\tau)$ in \eqref{eq:monodromy-flat} of the round cylinder we obtain for the bubbleton monodromy
\begin{equation} \label{eq:eigenline}
  h_{L,\alpha}\# \,F_\lambda |_{z=1} = h_{L,\alpha} \,M_\lambda(\tau) \, h^{-1}_{L,\alpha} \quad \Longleftrightarrow \quad \overline{M_\alpha(\tau)}^t L = L\,.
\end{equation}
Thus the condition on the line $L$ is that it has to be an eigenline of $\overline{M_\alpha(\tau)}^t$. Now $\mathrm{SU}_2$ acts transitively on $\bbC\bbP^1$ and $ h_{UL,\alpha} = U h_{L,\alpha}U^{-1}$ for any $U \in \mathrm{SU}_2$. Since dressing by $h_{L,\alpha}$ and $U h_{L,\alpha}U^{-1}$ give the same surface up to isometry and conformal coordinate change, we may choose without loss of generality the line $L=[1\,:\,0]$. 

The monodromy of an extended frame of a \cmc surface should be holomorphic in $\lambda \in \bbC^\times$ and unitary for $\lambda \in \bbS^1$. Clearly away from $\lambda = \alpha,\,1/\alpha$ we have
$$
    \overline{h_{L,\alpha}(1/\bar{\lambda})}^t = h_{L,\alpha}^{-1}(\lambda)
$$
so $h_{L,\alpha} M_\lambda(\tau) h_{L,\alpha}^{-1}$ is unitary on the unit circle, if we demand that
\begin{equation} \label{eq:alpha-not-unimodular}
    |\alpha| \neq 1 \,.
\end{equation}
Further $h_{L,\alpha} M_\lambda(\tau) h_{L,\alpha}^{-1}$ is holomorphic for all $\lambda \in \bbC^\times$ away from $\lambda =\alpha,\,\alpha^{-1}$. To make these two singularities removable, we impose the condition that $M_\alpha(\tau) = F_\alpha (1) = \pm \mathbbm{1}$, or equivalently that $\mu_\alpha (1) = \pm 1$ for the function $\mu_\lambda$ in \eqref{eq:mu-function}. This is equivalent to there existing an integer $K \in \bbZ$ such that
\begin{equation}\label{eq:alpha+}
    \alpha^{-1/2} + \alpha^{1/2} = 2K \,.
\end{equation}
Rewriting this as a quadratic equation we obtain for each $K\in\bbZ$ two real solutions 
\begin{equation} \label{eq:double-points}
    \alpha_{\pm} = 2K^2 -1 \pm 2K \sqrt{K^2-1}\,.
\end{equation}
First observe that $\alpha_- = \alpha_+^{-1}$. Now $\psi_{L,\alpha^{-1}} = \psi^{-1}_{L,\alpha}$ and 
\begin{equation*}
    h_{L,\alpha^{-1}} = \sqrt{\tfrac{\alpha - \lambda}{1-\alpha\lambda}}\,\sqrt{\tfrac{1-\alpha\lambda}{\alpha - \lambda}} \,\, h_{L,\alpha}^{-1} = \sqrt{\tfrac{\alpha - \lambda}{1-\alpha\lambda}}\,\sqrt{\tfrac{1-\alpha\lambda}{\alpha - \lambda}} \,\, h_{UL,\alpha} \,\,\mbox{ for } \,\,U = \begin{pmatrix} 0 & -1 \\ 1 & 0 \end{pmatrix} \,.
\end{equation*}
But since
$$
    \left.\sqrt{\tfrac{\alpha - \lambda}{1-\alpha\lambda}}\,\sqrt{\tfrac{1-\alpha\lambda}{\alpha - \lambda}}\,\right|_{\lambda =1} = -1 \mbox{ and } \left.\tfrac{\partial}{\partial \lambda}\right|_{\lambda =1}(\sqrt{\tfrac{\alpha - \lambda}{1-\alpha\lambda}}\,\sqrt{\tfrac{1-\alpha\lambda}{\alpha - \lambda}}) = 0\,, 
$$
dressing by $h_{L,\alpha}$ and $h_{L,\alpha^{-1}}$ gives the same bubbleton up to isometry and conformal coordinate change. Thus $\alpha_+$ and $\alpha_-$ give the same bubbleton, so we may omit the subscript, restrict to non-negative integers $K \geq 0$, and set $\alpha = \alpha_-$, so that $\alpha$ is as in \eqref{eq:alpha}.

Then $\alpha^{-1} - \alpha = (\alpha^{-1/2} + \alpha^{1/2})(\alpha^{-1/2} -\alpha^{1/2}) = 2K( \alpha^{-1/2} -\alpha^{1/2}$, and consequently
\begin{equation*}
    \alpha^{-1/2} - \alpha^{1/2} = 2\,\sqrt{K^2-1}\,.
\end{equation*}
Note that when $K=0,\,\pm 1$ then $\alpha = -1,\,1,\,1$ respectively, and it is not hard to see that $\alpha \notin \bbS^1$ when $|K|\geq 2$. Hence the condition \eqref{eq:alpha-not-unimodular} that $|\alpha | \neq 1$ requires that we impose $K \neq 0,\,\pm 1$, and consequently we may restrict to the case $K \geq 2$. To see that the singularities at $\lambda = \alpha,\,1/\alpha$ are now apparent, we use L'Hoptial's rule to obtain
\begin{equation*} \begin{split}
   &\lim_{\lambda \to \alpha} h_{L,\alpha} \, F_\lambda (1) \,h^{-1}_{L,\alpha} = \lim_{\lambda \to \alpha}\begin{pmatrix} \cos\mu_\lambda(1) & \tfrac{1-\alpha\lambda}{\alpha-\lambda}
   \mi\lambda^{-1/2}\sin\mu_\lambda(1) \\ \tfrac{\alpha-\lambda}{1-\alpha\lambda}
   \mi\lambda^{1/2}\sin\mu_\lambda(1) & \cos\mu_\lambda(1) \end{pmatrix} \\ &= \begin{pmatrix} \pm 1 & 0 \\ 0 & \pm 1 \end{pmatrix} + \mi\alpha^{-1/2}(1-\alpha^2)\lim_{\lambda \to \alpha}\frac{\sin\mu_\lambda(1)}{\alpha-\lambda}
   \begin{pmatrix} 0 & 1 \\ 0 & 0 \end{pmatrix} \\
   &= \begin{pmatrix} \pm 1 & 0 \\ 0 & \pm 1 \end{pmatrix} \pm \mi\alpha^{-1/2}4K(K^2-1)
   \begin{pmatrix} 0 & 1 \\ 0 & 0 \end{pmatrix}\,,
   \end{split}
\end{equation*}
with sign depending on the parity of $K$. A similar computation, or using $\overline{F_{1/\bar{\lambda}}}^{\,t} = F_\lambda^{-1}$ shows that $\lambda = 1/\alpha$ is also a removable singularity. 
\end{proof}
To obtain any multi-bubbleton of finite type one can dress by a finite product $\Pi\, h_{L,\alpha_j}$ of simple factors, but we may use each integer $K \geq 2$ only once, since otherwise the singularities are no longer removable in the dressed monodromy. Hence on a multi-bubbleton each lobe number can only appear once.
\section{The main result}

With the preparations of the preceding two sections we can now prove our main result. 
\begin{figure}[t]
\centering
\includegraphics[width=12.6cm]{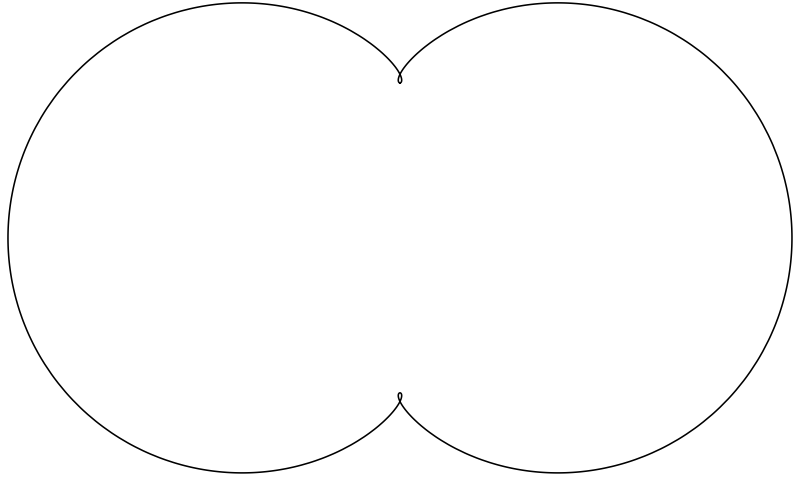}\hspace{0.0cm}
\caption{ \label{fig:curves} Planar curve on the 2-lobed single bubbleton. It has turning number $3$.}
\end{figure}
\begin{theorem}
A single bubbleton is not embedded.
\end{theorem}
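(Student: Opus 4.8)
The plan is to exhibit on a single bubbleton an explicit planar closed curve and to show its turning number is at least $3$. Since a regular simple closed curve in a plane has turning number $\pm 1$ by Hopf's Umlaufsatz, while a closed curve that lies on an embedded surface and is the image of an embedded circle in the domain is automatically simple, a planar curve of turning number $\ge 3$ on the surface is impossible, and so a single bubbleton cannot be embedded.

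First I would write the bubbleton down explicitly. By the Lemma a single bubbleton is $h_{L,\alpha}\#f_1$ with $L=[1:0]$ and $\alpha$ real of the form \eqref{eq:alpha} for a fixed integer $K\ge 2$, where $f_1$ is the round cylinder; by the dressing formula \eqref{eq:dressing} the dressed frame is $\widehat F_\lambda=h_{L,\alpha}\,F_\lambda\,h^{-1}_{\widetilde L,\alpha}$ with $\widetilde L=\overline{F_\alpha(z)}^{\,t}L$, and feeding this into \eqref{eq:sym-bobenko} at $\lambda=1$ gives the bubbleton $\widehat f=\widehat f(x,y)$. By the choice of $\alpha$ in the Lemma this immersion is $1$-periodic in $x$, so each coordinate circle $\{y=\mathrm{const}\}$ maps to a regular closed curve on the bubbleton (regular because $\widehat f$ is an immersion); moreover it is not null-homologous, since the bubbleton is topologically a cylinder and $\{y=\mathrm{const}\}$ is a core circle generating its first homology.

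Next I would single out a planar curve. Because $\alpha$ is real one has $\mu_\lambda(\bar z)=\overline{\mu_\lambda(z)}$ for $\lambda\in\bbS^1$, which makes the frame of the round cylinder intertwine the coordinate change $z\mapsto\bar z$ with a reflection of $\bbR^3$ in a plane (this symmetry of the round cylinder is already visible in the explicit formula for $f_1$ above), and since the simple factor is diagonal with real $\alpha$ the symmetry survives the dressing: there is a plane $P\subset\bbR^3$ and the reflection $\rho$ in $P$ with $\widehat f(x,-y)=\rho\,\widehat f(x,y)$. Hence $c:=\widehat f(\,\cdot\,,0)$ is pointwise fixed by $\rho$, so $c$ lies in $P$; by the previous paragraph $c$ is a regular closed curve on the bubbleton that is not null-homologous. (One checks along the way that $c$ is in fact a geodesic, consistent with Figure \ref{fig:curves}, although this is not needed below.)

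The main step, and the main obstacle, is to show that the turning number of $c$ in the plane $P$ equals $2K-1$. A Sym--Bobenko computation along $y=0$ shows that $\partial_x\widehat f(x,0)$ is a positive multiple of $\mathrm{Ad}_{\widehat F_1(x,0)}(\xi)$ for a fixed $\xi\in\mathfrak{su}_2$, so the turning number is the winding number in $P$ of the loop $x\mapsto\mathrm{Ad}_{\widehat F_1(x,0)}(\xi)$, $x\in[0,1]$. Along $y=0$ the data are real and fully explicit: $F_1(x,0)$ is $\exp$ of $\pi x$ times a fixed off-diagonal generator, so its adjoint action is rotation by $2\pi x$ and contributes one turn (the bare cylinder meridian is an embedded circle), while $h^{-1}_{\widetilde L(x),\alpha}(1)=U(x)\,h_{L,\alpha}(1)^{-1}\,U(x)^{-1}$ where $\widetilde L(x)=[\cos\pi Kx:-\mi\alpha^{-1/2}\sin\pi Kx]$ and $U(x)\in\mathrm{SU}_2$ carries $[1:0]$ to $\widetilde L(x)$, and as $x$ runs over $[0,1]$ the phase $\pi Kx$ drags $\widetilde L$ once around a circle in $\bbC\bbP^1$ per lobe, contributing a further $2K-2$ to the winding, for a total of $2K-1\ge 3$. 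The delicate points are the bookkeeping of the square-root factors $(\tfrac{\alpha-\lambda}{1-\alpha\lambda})^{-1/2}$ in $h_{L,\alpha}$ and of their $\lambda$-derivatives at $\lambda=1$, the identification of the fixed vector $\xi$, and — crucially — verifying that the cylinder contribution and the dressing contribution have opposite orientation relative to $P$, so that the count is the odd number $2K-1$ rather than, say, $2K+1$. Once this is in place one concludes: if a single bubbleton were embedded, then $\widehat f$ would be injective and $c$, being the image of the embedded circle $\{y=0\}$, would be a simple closed regular curve lying in the plane $P$, hence of turning number $\pm1$ by Hopf's Umlaufsatz, contradicting $2K-1\ge 3$. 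Therefore a single bubbleton is not embedded.
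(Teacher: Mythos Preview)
Your strategy is exactly the paper's: exhibit the $y=0$ curve as a planar closed curve on the bubbleton and show its turning number is $2K-1\ge 3$, then invoke the Umlaufsatz. Your symmetry argument for planarity (the involution $z\mapsto\bar z$ together with $\alpha\in\bbR$ and the diagonal form of $h_{L,\alpha}$) is a clean alternative to the paper's route, which simply carries out the Sym--Bobenko computation \eqref{eq:3parts}--\eqref{eq:part3} and observes that the off-diagonal entries are real.

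The gap is in your turning number computation. Your claim that $\partial_x\widehat f(x,0)$ is a positive multiple of $\mathrm{Ad}_{\widehat F_1(x,0)}(\xi)$ for a fixed $\xi$ is correct (this is the standard consequence of the form of the Maurer--Cartan form of an extended frame), so the turning number is indeed the winding number of $x\mapsto\mathrm{Ad}_{\widehat F_1(x,0)}(\xi)$ in $P$. But the step where you split this winding as ``$1$ from $F_1$ plus $2K-2$ from the dressing factor'' is not justified: $\mathrm{Ad}$ of a product of noncommuting loops in $\mathrm{SU}_2$ does not in general have winding number equal to the sum of the individual windings, and the loops $F_1(x,0)$ and $h^{-1}_{\widetilde L(x),\alpha}(1)$ do not commute. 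You also have not explained why the dressing factor contributes $2K-2$ rather than, say, $2K$ (your own remark that ``the phase $\pi Kx$ drags $\widetilde L$ once around $\bbC\bbP^1$ per lobe'' suggests $K$, not $K-1$), nor why the two contributions combine with that particular sign; you yourself flag both points as ``delicate'' and ``crucial'' without resolving them. The paper sidesteps all of this by computing $h^{-1}_{\widetilde L,\alpha}$, its $\lambda$-derivative, and the three summands \eqref{eq:part1}--\eqref{eq:part3} explicitly, arriving at closed formulas $x\mapsto(X(x),Z(x))$ and then evaluating the curvature integral to $2K-1$. To complete your approach you would need either a genuine homotopy argument (e.g.\ deforming $\alpha$ along $(0,1)$ and tracking the winding, checking that the tangent never vanishes in $P$) or to fall back on the explicit computation the paper performs.
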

\begin{proof}
From \eqref{eq:dressing} the extended frame of a bubbleton is $h_{L,\,\alpha} \, F_\lambda \, h^{-1}_{\widetilde{L},\,\alpha}$ with
\begin{equation}\label{eq:bubbleton-frame}
    \widetilde{L} = \overline{F_\alpha(z)}^{\,\,t} \begin{pmatrix} 1\\0 \end{pmatrix} = \begin{pmatrix} \cos \bar{\mu}_\alpha \\ -\mi \alpha^{-1/2} \sin \bar{\mu}_\alpha \end{pmatrix}\,.
\end{equation}
We next explicitly compute the curve $y=0$ on a bubbleton. We may set the mean curvature to $H=-1/2$. Inserting the bubbleton frame into the Sym-Bobenko formula \eqref{eq:sym-bobenko} gives three terms
\begin{equation} \label{eq:3parts}
    \left. \mi h'_{L,\,\alpha} h_{L,\,\alpha}^{-1} + \mi h_{L,\,\alpha} F^{\,\prime}_\lambda \,F_\lambda^{-1} h_{L,\,\alpha}^{-1} +  \mi h_{L,\,\alpha} F_\lambda h_{\widetilde{L},\,\alpha}^{-1\,\prime} h_{\widetilde{L},\,\alpha} \,F_\lambda^{-1} h_{L,\,\alpha}^{-1}\right|_{\lambda =1,\,y=0}\,.
\end{equation}
In the following we will compute these three terms. First, we have for $h_{L,\alpha}$ in \eqref{eq:simple-matrix} that
\begin{equation} \label{eq:part1}
    \left. \mi\,h^\prime_{L,\alpha} h_{L,\alpha}^{-1} \right|_{\lambda =1} = \frac{K}{2\sqrt{K^2 -1}}
    \begin{pmatrix} -\mi & 0 \\ 0 & \mi \end{pmatrix}\,.
\end{equation}
The second term is
\begin{equation} \label{eq:part2}
    \left. \mi\,h_{L,\alpha} F^{\,\prime}_\lambda  F^{-1} h_{L,\alpha}^{-1}\right|_{\lambda =1,\,y=0} = \frac{1}{2}
    \begin{pmatrix} -\mi\sin^2(\pi x) & -\frac{1}{2}\sin(2\pi x) \\ \frac{1}{2}\sin(2\pi x) & \mi\sin^2(\pi x) \end{pmatrix}\,.
\end{equation}
If $L=[a:b]$, then with respect to the standard basis of $\bbC^2$ the projection $\pi_L$ is
\begin{equation*}
    \pi_L = \frac{1}{|a|^2 + |b|^2}\begin{pmatrix} |a|^2 & a\bar{b} \\ \bar{a}b & |b|^2 \end{pmatrix} \spaceperiod
\end{equation*}
Hence for the line $\widetilde{L}$ in \eqref{eq:bubbleton-frame} we obtain
\begin{equation*}
    \pi_{\widetilde{L}} = \begin{pmatrix}
    \frac{|\cos\mu_\alpha|^2}{|\cos\mu_\alpha|^2 + \alpha^{-1} |\sin\mu_\alpha |^2} & \frac{\mi \alpha^{-1/2}\sin\mu_\alpha \cos\bar{\mu}_\alpha}{|\cos\mu_\alpha|^2 + \alpha^{-1} |\sin\mu_\alpha |^2} \\ \frac{-\mi\alpha^{-1/2}\sin\bar{\mu}_\alpha \cos\mu_\alpha}{|\cos\mu_\alpha|^2 + \alpha^{-1} |\sin\mu_\alpha |^2} & \frac{\alpha^{-1} |\sin\mu_\alpha|^2}{|\cos\mu_\alpha|^2 + \alpha^{-1} |\sin\mu_\alpha |^2} \end{pmatrix} \spaceperiod
\end{equation*}
The function $\mu_\lambda$ defined in \eqref{eq:mu-function} evaluated at $\lambda = \alpha$ along $y=0$ reads $\mu_\alpha = \pi K\,x$. Then $\psi_{\widetilde{L},\,\alpha}(\lambda)$ in \eqref{eq:simple} along $y=0$ computes to
$$
\left.\psi_{\widetilde{L},\,\alpha}(\lambda)\right|_{y=0} =
    \begin{pmatrix}
    \frac{\lambda  \alpha ^2-2 \alpha +\lambda +\left(\alpha ^2-1\right) \lambda  \cos (2K\pi  x)}{2 (\alpha  \lambda -1) \left(\alpha \cos^2(K\pi x) + \sin^2(K\pi x)\right)} &
    \frac{\mi(\alpha -1)\sqrt{\alpha}(\lambda +1)\sin(2K\pi x)}
    {2(\alpha\lambda -1)
    \left(\alpha\cos^2(K\pi x) + \sin^2(K\pi x)\right)} \\
    -\frac{\mi(\alpha -1)\sqrt{\alpha}(\lambda +1)
    \sin(2K\pi x)}{2(\alpha \lambda -1)
    \left(\alpha\cos^2(K\pi x) + \sin^2(K\pi x)\right)} & -\frac{\alpha^2-2\lambda \alpha + \left(\alpha^2-1\right)
    \cos(2K\pi x)+1}{2(\alpha\lambda -1)\left(\alpha
    \cos^2(K\pi x) + \sin^2(K\pi x)\right)}
\end{pmatrix}
$$
Evaluating at $\lambda = 0$ gives
\begin{equation*}
    \left.\psi_{\widetilde{L},\,\alpha}(0)\right|_{y=0} =
    \begin{pmatrix}
    \frac{\alpha }{\alpha\cos^2(K\pi x) + \sin^2(K\pi x)} &
 \frac{\mi (1-\alpha)\sqrt{\alpha } \sin (2K\pi x)}{2\left(\alpha \cos^2(K\pi x)+\sin^2(K\pi x)\right)} \\
 \frac{\mi(\alpha -1)\sqrt{\alpha } \sin (2K\pi x)}{2 \left(\alpha
   \cos^2(K\pi x)+\sin^2(K\pi x)\right)} & \frac{\alpha^2 + \left(\alpha^2 - 1\right) \cos (2K\pi x)+1}{2\left(\alpha \cos^2(K\pi x) + \sin^2(K\pi x)\right)}
\end{pmatrix} \spaceperiod
\end{equation*}
Now $\det\left.\psi_{\widetilde{L},\,\alpha}(0)\right|_{y=0} = \alpha \neq 0$, and Gram-Schmidt on $\alpha^{-1/2}\psi_{\widetilde{L},\,\alpha}(0)|_{y=0} = Q\,R$ gives
\begin{equation*}
    Q = \begin{pmatrix}
        \frac{2 \sqrt{\alpha }}{\sqrt{(\alpha +1)^2-(\alpha -1)^2 \cos^2(2K\pi x)}} & \frac{\mi (\alpha -1) \sin (2K\pi x)}{\sqrt{(\alpha -1)^2 \sin^2(2K\pi x)+4 \alpha }} \\ \frac{\mi(\alpha -1) \sin (2\pi K x)}{\sqrt{(\alpha +1)^2-(\alpha -1)^2 \cos^2(2K\pi x)}} & \frac{2 \sqrt{\alpha }}{\sqrt{(\alpha
   -1)^2 \sin ^2(2K\pi x)+4 \alpha }}
   \end{pmatrix}\,.
\end{equation*}
Putting everything together gives that $h^{-1}_{\widetilde{L},\alpha}|_{y=0} = \sqrt{\det\psi_{\widetilde{L},\alpha}} \psi_{\widetilde{L},\alpha}^{-1} Q$ is equal to
\begin{equation*}
    \begin{pmatrix}
 \frac{\sqrt{2}\sqrt{\alpha}((\alpha +1)(\lambda -1)-(\alpha -1)
   (\lambda +1) \cos(2K\pi x))}{\sqrt{\lambda -\alpha }
   \sqrt{\alpha\lambda -1} \sqrt{-\cos (4K\pi x) (\alpha -1)^2 + \alpha(\alpha +6)+1}} & -\frac{\mi \left(\alpha ^2-1\right)
   \sin(2K\pi x)}{\sqrt{\lambda - \alpha }\sqrt{\alpha\lambda -1}
   \sqrt{(\alpha -1)^2 \sin ^2(2K\pi x) + 4 \alpha }} \\
 \frac{\mi\sqrt{2} \left(\alpha^2-1\right) \lambda  \sin(2K\pi x)}{\sqrt{\lambda -\alpha }\sqrt{\alpha\lambda -1} \sqrt{-\cos (4 K\pi x)(\alpha -1)^2+\alpha (\alpha +6)+1}} & \frac{\sqrt{2}
   \sqrt{\alpha}((\alpha +1)(\lambda -1)+(\alpha -1)(\lambda +1)
   \cos(2K\pi x))}{\sqrt{\lambda -\alpha }\sqrt{\alpha\lambda
   -1} \sqrt{-\cos (4K\pi x)(\alpha -1)^2+\alpha (\alpha +6)+1}}
    \end{pmatrix} \,.
\end{equation*}
Differentiating with respect to $\lambda$ gives that $h^{-1\,\prime}_{\widetilde{L},\alpha}|_{y=0,\lambda=1}$ is equal to
\begin{equation*}
    \begin{pmatrix}
     \frac{\sqrt{2}\sqrt{\alpha}(\alpha +1)}{\sqrt{-(\alpha-1)^2}
    \sqrt{-\cos(4K\pi x)(\alpha -1)^2 + \alpha(\alpha +6)+1}} &
    \frac{\mi\sqrt{\alpha -1}(\alpha +1)\sin(2K\pi x)}
    {2\sqrt{1-\alpha}\sqrt{(\alpha -1)^2 \sin^2(2K\pi x)+4\alpha}}
    \\
    \frac{\mi\sqrt{\alpha -1}(\alpha +1)\sin(2K\pi x)}
    {\sqrt{2-2\alpha}\sqrt{-\cos(4K\pi x)(\alpha -1)^2 +
    \alpha(\alpha +6)+1}} &
    \frac{\sqrt{2}\sqrt{\alpha}(\alpha +1)}
    {\sqrt{-(\alpha -1)^2}\sqrt{-\cos(4K\pi x)(\alpha -1)^2 +
    \alpha(\alpha +6)+1}}
\end{pmatrix}\,,
\end{equation*}
and consequently
\begin{equation*}
\left.h^{-1\,\prime}_{\widetilde{L},\alpha}\,  h_{\widetilde{L},\alpha} \right|_{y=0,\lambda=1} =
\begin{pmatrix}
    -\frac{\alpha ^2+(\alpha +1)^2 \cos(2K\pi x)-1}{2 \left(\cos (2K\pi x) (\alpha -1)^2+\alpha^2 - 1 \right)} & -\frac{\mi \sqrt{\alpha }(\alpha +1) \sin (2 K \pi  x)}{\cos (2 K \pi  x) (\alpha -1)^2 + \alpha^2-1} \\
    \frac{\mi\sqrt{\alpha}(\alpha +1) \sin(2K\pi x)}{\cos(2K\pi x)
   (\alpha -1)^2+\alpha^2-1} & \frac{\alpha^2+(\alpha +1)^2 \cos (2K\pi x)-1}{2 \left(\cos (2K\pi x) (\alpha -1)^2+\alpha^2 - 1 \right)}
\end{pmatrix} \,.
\end{equation*}
Thus the final contribution to the curve $y=0$ comes from
\begin{equation} \label{eq:part3}
    \left. \mi h_{L,\alpha} F_\lambda h_{\widetilde{L},\alpha}^{\prime -1} h_{\widetilde{L},\alpha} F_\lambda ^{-1} h_{L,\alpha}^{-1}\right|_{y=0,\lambda=1} =
    \begin{pmatrix} \mi u & - v \\ v & -\mi u \end{pmatrix}
\end{equation}
with the real valued functions $u$ and $v$ given by
\begin{equation*} \begin{split}
    u &= \tfrac{(\alpha +1) ((\alpha -1) \cos (2 \pi  K x)-\alpha -1) \left(\left(\sqrt{\alpha }-1\right)^2 \cos (2 \pi  (K+1)
   x)+\left(\sqrt{\alpha }+1\right)^2 \cos (2 \pi  (K-1) x)+2 (\alpha -1) \cos (2 \pi  x)\right)}{2 (\alpha -1) \left((\alpha
   -1)^2 (-\cos (4 \pi  K x))+\alpha ^2+6 \alpha +1\right)}\,, \\
   v &= \tfrac{(\alpha +1) \left(\left(\sqrt{\alpha }-1\right)^2 \sin (2 \pi  (K+1) x)-\left(\sqrt{\alpha }+1\right)^2 \sin (2 \pi
   (K-1) x)+2 (\alpha -1) \sin (2 \pi  x)\right) ((\alpha -1) \cos (2 \pi  K x)-\alpha -1)}{2 (\alpha -1) \left((\alpha -1)^2
   (-\cos (4 \pi  K x))+\alpha ^2+6 \alpha +1\right)}\,.
   \end{split}
\end{equation*}
Inspection of the three summands in \eqref{eq:3parts} computed in \eqref{eq:part1}, \eqref{eq:part2} and \eqref{eq:part3} show that the $y=0$ curve on the bubbleton is a planar curve, since the  off-diagonal terms do not have an imaginary part. Combining these terms then gives the planar curve $x \mapsto (X(x),\,Z(x))$ with
\begin{equation*} \begin{split}
    &X(x) = \tfrac{\sin(2\pi x)}{4}-\tfrac{(\alpha +1) \left(\left(\sqrt{\alpha }-1\right)^2 \sin (2\pi (K+1)x)-\left(\sqrt{\alpha }+1\right)^2 \sin (2 \pi  (K-1) x)+2 (\alpha -1) \sin (2 \pi  x)\right) ((\alpha -1) \cos (2\pi K x)-\alpha -1)}{2(\alpha -1)^3 \cos (4\pi Kx)-(\alpha -1) (\alpha (\alpha +6)+1)}\,,  \\
     &Z(x) = \tfrac{((\alpha^2 -1) \cos (2 \pi  K x)-(\alpha +1)^2) \left(\left(\sqrt{\alpha}-1\right)^2\cos(2\pi(K+1)x) + \left(\sqrt{\alpha }+1\right)^2 \cos(2\pi (K-1)x)+2(\alpha -1)\cos(2\pi x)\right)}{2(\alpha -1)\left((\alpha -1)^2(-\cos(4\pi K x)) +\alpha (\alpha + 6 ) + 1\right)} \\ &\qquad \qquad \qquad \hspace{3cm}-\tfrac{\sin^2(\pi x)}{2} - \tfrac{K}{2\sqrt{K^2-1}}\,.   
\end{split}
\end{equation*}
The turning number of this immersed planar curve $[0,\,1] \to \bbR^2,\,x \mapsto (X(x),\,Z(x))$ computes to
\begin{equation*}
    \frac{1}{2\pi} \int_0^1 \frac{X'(x)Z''(x) - X''(x)Z'(x)}{X'^2(x) + Z'^2(x)}\,dx\, = 2K - 1\,.
\end{equation*}
Since $K\geq 2$, the planar curve has self intersections, and thus the bubbleton is not embedded. Plots of the curves for $K=2,\,3,\,4,\,5$ are shown in figures \ref{fig:curves} and \ref{fig:curves2}.
\end{proof}
%
%
\begin{figure}[h]
\centering
\includegraphics[width=5.3cm]{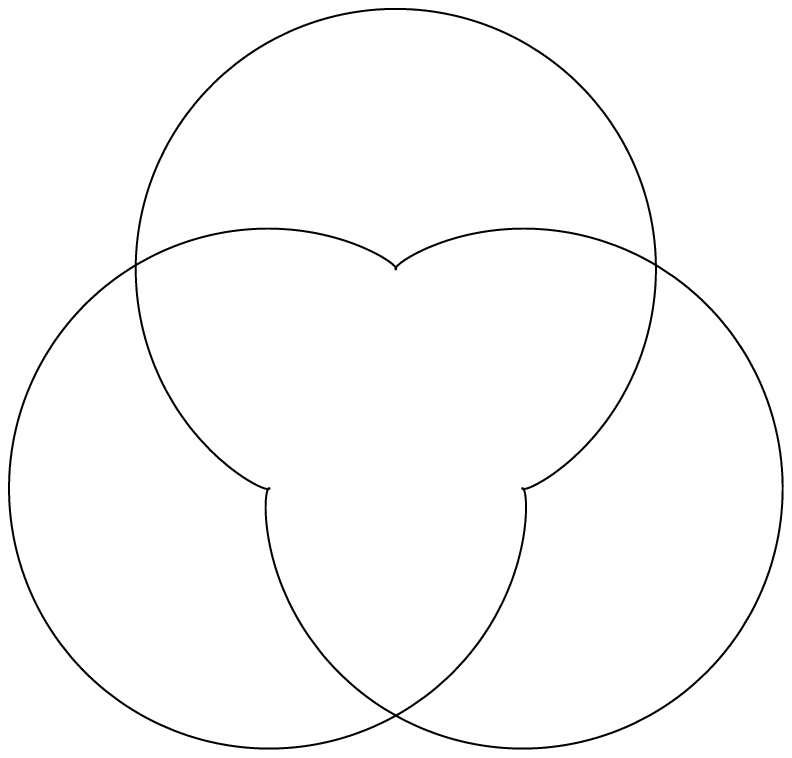}\hspace{0.0cm}
\includegraphics[width=5.3cm]{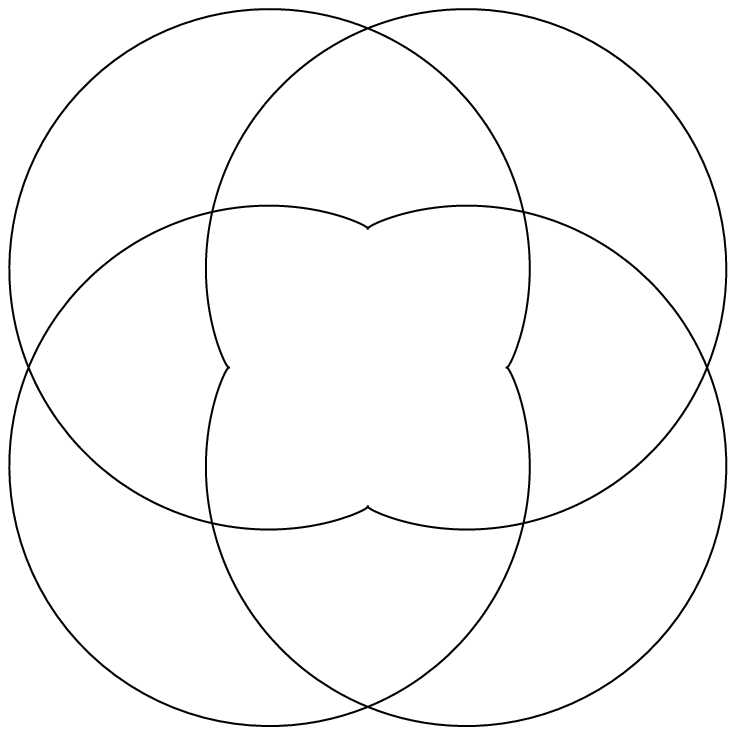}\hspace{0.0cm}
\includegraphics[width=5.3cm]{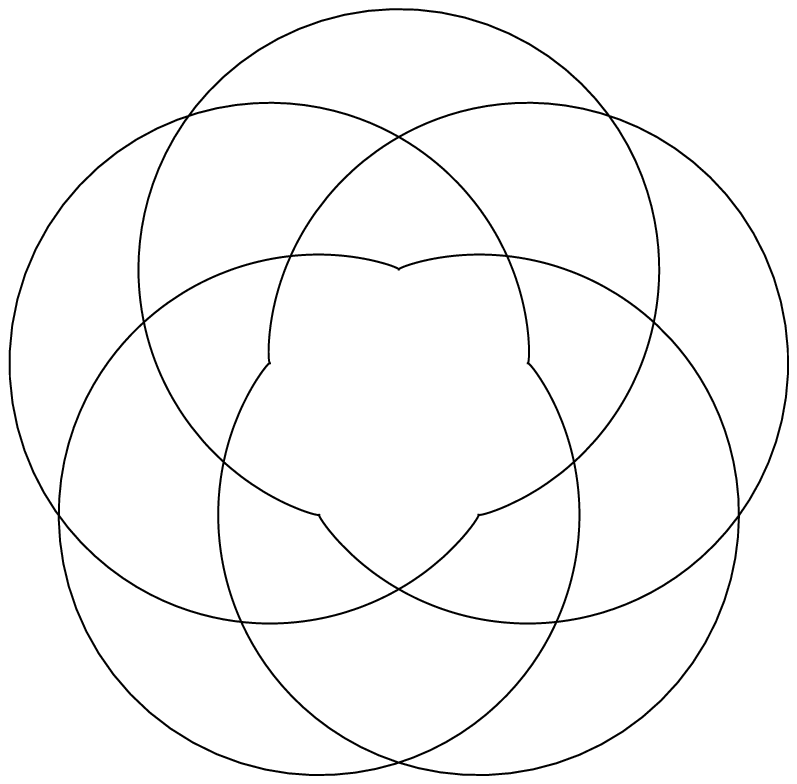}\hspace{0.0cm}
\caption{ \label{fig:curves2} Planar curves on the 3, 4 and 5-lobed single bubbletons, with turning numbers $5,\,7,\,9$ respectively. What appear to be cusps on the immersed curves are in fact small loops, as in the curve in figure \ref{fig:curves}.}
\end{figure}


\def\cydot{\leavevmode\raise.4ex\hbox{.}} \def\cprime{$'$}
\providecommand{\bysame}{\leavevmode\hbox to3em{\hrulefill}\thinspace}
\providecommand{\MR}{\relax\ifhmode\unskip\space\fi MR }
\providecommand{\MRhref}[2]{%
  \href{http://www.ams.org/mathscinet-getitem?mr=#1}{#2}
}
\providecommand{\href}[2]{#2}

\end{document}